\documentclass{amsart}

\usepackage{xcolor}
\usepackage{mathtools}
\usepackage{amsmath}
\usepackage{amsthm}
\usepackage{graphicx}
\usepackage{amssymb}
\usepackage{epstopdf}
\usepackage{nicefrac}
\usepackage{mathrsfs}
\usepackage{hyperref}
\usepackage{enumitem}
\usepackage{tikz}
\usepackage{caption}
\usepackage{subcaption}
\usepackage{cite}

\newtheorem{theorem}{Theorem}[section]
\newtheorem{lemma}[theorem]{Lemma}

\newtheorem{corollary}[theorem]{Corollary}
\theoremstyle{remark}
\newtheorem{remark}[theorem]{Remark}
\newtheorem{definition}[theorem]{Definition}
\newcommand{\HH}{\mathscr H}
\newcommand{\TT}{\mathbb T}
\newcommand{\DD}{\mathbb D}
\newcommand{\NN}{\mathbb N}

\newcommand{\PP}{\mathbb P}
\newcommand{\ind}{\mathbf 1}

\DeclareMathOperator{\supp}{supp}
\setlist[enumerate]{itemsep=3pt,topsep=5pt}

\title[A Fejér--Riesz inequality for Dirichlet series]{A Fejér--Riesz inequality for Dirichlet series}
\author{Karl-Mikael Perfekt}
\date{\small \today}

\begin{document}

\begin{abstract}
We prove the following inequality for Dirichlet polynomials:
\[
\int_0^1 |f(1/2+\sigma)|\,d\sigma \lesssim \lim_{T\to\infty} \frac{1}{2T} \int_{-T}^T |f(it)| \, dt.
\]
In particular, for a Dirichlet series $f(s) = \sum_{n\geq 1} a_n n^{-s}$ belonging to the Hardy space $\mathscr{H}^1$ of Dirichlet series, 
\[
 \left|a_1+\sum_{n=2}^\infty \frac{a_n}{\sqrt n\log n}\right|
 \lesssim \|f\|_{\HH^1}.
\]
This answers a question raised previously in the literature and it proves that the multiplicative Hilbert matrix has a bounded symbol.
\end{abstract}
\maketitle
\section{Introduction}

The classical Fejér--Riesz inequality states that for every analytic polynomial $f(z) = \sum_{n \leq N} \hat{f}(n) z^n$, 
\begin{equation} \label{eq:fejerriesz}
\int_{-1}^1 |f(x)|^p \, dx \leq \pi \int_0^{2\pi} |f(e^{i\theta})|^p \, \frac{d\theta}{2\pi}, \qquad p > 0.
\end{equation}
For $p = 2$, this inequality is equivalent to the boundedness of the Hilbert matrix $(1/(m+n+1))_{m,n=0}^\infty$, as is readily seen from the identity 
\[
\int_0^1 |f(x)|^2 \, dx = \sum_{m, n = 0}^\infty \frac{\hat{f}(m) \overline{\hat{f}(n)}}{m+n+1}.
\]

The normalized integral on the right-hand side of \eqref{eq:fejerriesz} is precisely the Hardy space $H^p(\mathbb{T})$-norm of $f$, and the inequality can thus be recast as a Carleson embedding statement:
\[
\int_{-1}^1 |f(x)|^p \, dx \leq \pi \|f\|_{H^p}^p, \qquad f \in H^p(\mathbb{T}).
\]
In one variable, the availability of the inner-outer factorization shows that the Carleson measure property is independent of $p$. Up to this fact, the truth of the Fejér--Riesz inequality for one $p = p_0$ is equivalent to its validity for all $p > 0$.

By contrast, in the setting of Hardy spaces of Dirichlet series $\HH^p$, $p \geq 1$, any  factorization mechanism of this type is known to be false \cite{OrtegaSeip}. Recall here that $\HH^p$-functions are represented by Dirichlet series, 
\[
f(s) = \sum_{n=1}^\infty a_n n^{-s}, \qquad \Re s > 1/2,
\]
and that the \textit{multiplicative Hilbert matrix} naturally arises from the identity
\[
   \int_0^\infty |f(1/2+\sigma)-a_1|^2 \,d\sigma = \sum_{m, n=2}^\infty \frac{a_m \overline{a_n}}{\sqrt {mn}\log mn},
\]
the right-hand side coinciding with its quadratic form \cite{BrevigEtAl}. A simple argument shows that this form is bounded, and thus that the multiplicative Hilbert matrix
\[ 
\left(\frac{1}{\sqrt{mn} \log mn} \right)_{m,n=2}^\infty \colon \ell^2(\mathbb{Z}_{\geq 2}) \to  \ell^2(\mathbb{Z}_{\geq 2})
\]
is a continuous operator.

However, as alluded to earlier, the transition to other values of $p$ is no longer straightforward. The most pertinent inequality is obtained when $p = 1$. Does it hold that
\begin{equation} \label{eq:signedhardy}
\left|\sum_{n=2}^\infty \frac{a_n}{\sqrt n\log n}\right| = \left| \int_0^\infty (f(1/2+\sigma) - a_1) \,d\sigma \right| \lesssim \|f\|_{\HH^1}?
\end{equation}
This problem was already identified in \cite{BrevigEtAl}, where it was noted that a positive resolution is equivalent to the existence of a bounded symbol for the multiplicative Hilbert matrix; see Corollary~\ref{cor:bddsymbol} for the precise meaning of this latter statement. The problem was then later restated in the survey \cite[Problem~3.2]{SaksmanSeipSurvey}, and has remained open.

Some weaker positive results were obtained, however.  In \cite{BrevigBayart} it was proven that
\[
\left|\sum_{n=2}^\infty \frac{a_n}{\sqrt n\log n}\right| \lesssim_p \|f\|_{\HH^p}, \qquad p > 1,
\]
and in \cite{BondarenkoEtAl} it was shown that
\[
\left|\sum_{n=2}^\infty \frac{a_n}{\sqrt n (\log n)^\beta}\right| \lesssim_\beta \|f\|_{\HH^1}, \qquad \beta > 1.
\]

Around the same time, Harper's work on multiplicative chaos \cite{Harper} showed that Carleson embeddings really are $p$-dependent. A basic feature of the $\HH^2$-theory is the so-called local embedding inequality \cite{HLS},
\[
\int_0^1 |f(1/2 + it)|^2 \, dt \lesssim \|f\|_{\HH^2}^2.
\]
For a long time, it was an open problem whether the local embedding inequality held for non-even values  of $q$,
\begin{equation} \label{eq:qemb}
\int_0^1 |f(1/2 + it)|^q \, dt \lesssim \|f\|_{\HH^q}^q?
\end{equation}
The investigations of \cite{Harper} showed, as a by-product, that this local embedding inequality is \textit{false} for $1 \leq q < 2$. The inequality is still open for $q > 2$, $q \notin 2\mathbb{N}$. Note that if \eqref{eq:qemb} had been true for $q = 1$, it would have implied the Carleson embedding inequality
\begin{equation} \label{eq:Carlesonline}
\int_0^1 |f(1/2 + \sigma)| \, d\sigma \lesssim \|f\|_{\HH^1},
\end{equation}
stronger than \eqref{eq:signedhardy}, see \cite[Theorem 4]{OlsenSaksman}.

The main result of this paper is that the embedding inequality \eqref{eq:Carlesonline} does hold. Before stating the theorem, let us first briefly explain the standard notation in the study of Hardy spaces of Dirichlet series. We associate each $z \in \TT^\infty$ with a completely multiplicative character $z \colon \NN \to \TT$, uniquely determined by the fact that $z(p_j) = z_j$, where $p_j$ is the $j$th prime, $j \geq 1$. In this way, we identify each Dirichlet polynomial
\[
 f(s)=\sum_{n=1}^N a_n n^{-s}
 \]
with its Bohr lift  $\mathscr{B} f$, a polynomial in the variables $z = (z_1, z_2, \ldots)$,
\[
 \mathscr{B} f(z) = F(z) = \sum_{n=1}^N a_n z (n), 
 \qquad z \in \TT^\infty.
\]
The Hardy space $\HH^p$-norm of a Dirichlet polynomial $f$, $1 \leq p < \infty$, is then given by
\[
 \|f\|_{\HH^p}^p := \int_{\TT^\infty}|F(z)|^p\,dm(z) = \lim_{T\to\infty} \frac{1}{2T} \int_{-T}^T |f(it)|^p \, dt 
\]
where $m$ is the normalized Haar measure on $\TT^\infty$, and the second equality is a standard consequence of Kronecker's theorem. The space $\HH^p$, $1 \leq p < \infty$,  is defined by the completion of Dirichlet polynomials in this norm. See \cite{QueffelecQueffelec} for further details. 

\begin{theorem}\label{thm:main}
There is a constant $C_0 >0$ such that 
\begin{equation}\label{eq:Hardyabsvalue}
 \int_0^1 |f(1/2+\sigma)|\,d\sigma\le C_0\|f\|_{\HH^1}, \qquad f \in \HH^1.
\end{equation}
Therefore, there is a constant $C_0' > 0$ such that
\begin{equation}\label{eq:hardy}
 \left|a_1+\sum_{n=2}^\infty \frac{a_n}{\sqrt n\log n}\right|
 \le C_0' \|f\|_{\HH^1}, \qquad f \in \HH^1.
\end{equation}
The sum in \eqref{eq:hardy} is convergent for every $f \in \HH^1$.
\end{theorem}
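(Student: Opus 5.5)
The second assertion should follow from the first, so the plan is to prove \eqref{eq:Hardyabsvalue} and then deduce \eqref{eq:hardy}. Assume \eqref{eq:Hardyabsvalue}. For a Dirichlet polynomial, integrate term by term: $\int_0^\infty (f(1/2+\sigma)-a_1)\,d\sigma=\sum_{n\ge 2} a_n/(\sqrt n\log n)$. The integral over $[0,1]$ is controlled by \eqref{eq:Hardyabsvalue}. On $[1,\infty)$ the series for $f-a_1$ converges absolutely, since $|a_n|\le \|f\|_{\HH^1}$. Hence $\int_1^\infty|f(1/2+\sigma)-a_1|\,d\sigma\le \|f\|_{\HH^1}\sum_{n\ge2} n^{-3/2}/\log n$. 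This gives \eqref{eq:hardy} for polynomials. The functional $f\mapsto\int_0^\infty(f(1/2+\sigma)-a_1)\,d\sigma$ is then bounded and extends to $\HH^1$. Convergence of the series for general $f$ will be obtained by applying the same bound to the partial-sum projections $f\mapsto \sum_{n\le N}a_nn^{-s}$ on the Dirichlet side, or (more safely) to the smooth summation $\sum a_n n^{-s-\varepsilon}$, together with a Tauberian argument.

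For \eqref{eq:Hardyabsvalue} I would argue by duality. For $g\in L^\infty(0,1)$ with $|g|\le1$, set
\[
K_g(z)=\int_0^1 g(\sigma)\,k_\sigma(z)\,d\sigma,\qquad k_\sigma(z)=\prod_j\bigl(1-p_j^{-1/2-\sigma}\bar z_j\bigr)^{-1}.
\]
Then $\int_0^1 g(\sigma)f(1/2+\sigma)\,d\sigma=\langle F,K_g\rangle$. It therefore suffices to find $\Phi_g\in L^\infty(\TT^\infty)$ with $\|\Phi_g\|_\infty\le C_0$ whose Fourier coefficients at the characters $z(n)$ equal $\int_0^1 g(\sigma)n^{-1/2-\sigma}\,d\sigma$. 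The coefficients at non-analytic frequencies are free. This is exactly a bounded-symbol statement. Since $\int_0^1|f|$ is the supremum over $g$, no $H^1$--BMO duality on $\TT^\infty$ is needed; an explicit bounded symbol suffices.

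To build $\Phi_g$, I would imitate the one-variable proof of Fejér--Riesz, where a subharmonic majorant on a half-disc replaces the Poisson kernel. The naive bound $|f(1/2+\sigma)|\le\int|F|P_\sigma\,dm$ fails, because $\int_0^1P_\sigma\,d\sigma$ is unbounded. The point evaluation norm on $\HH^1$ is about $\zeta(1+2\sigma)\sim 1/(2\sigma)$, which is not integrable. The gain must therefore come from averaging over $\sigma$ against a \emph{different} family of representing measures $\mu_\sigma$ for the points $(p_j^{-1/2-\sigma})_j$, chosen so that $\int_0^1\mu_\sigma\,d\sigma\le C\,m$.

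Concretely, I would split the primes into dyadic scales adapted to $\sigma\in[2^{-k-1},2^{-k}]$: small primes $p\le e^{2^k}$ and large primes $p>e^{2^k}$. For the large primes the point is essentially at distance $\gtrsim1$ from the boundary in $\ell^2$-sense, so the Poisson kernel there is uniformly bounded. For the small primes I would replace the Poisson kernel by the measure obtained from the one-parameter disc $\zeta\mapsto(p_j^{-1/2-\sigma-\zeta})_j$ (the vertical line through $1/2+\sigma$). This uses subharmonicity of $|f|$ along the half-plane $\Re s>1/2$, with harmonic measure of a box of size $\sim\sigma$. Summing over $k$, the resulting measures should overlap boundedly.

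The main obstacle is precisely this last step. One must show that the averaged representing measures have bounded density with respect to $m$, uniformly over the dyadic scales. Harper's work shows that local $L^1$ estimates on vertical segments fail, so the averaging in $\sigma$ must genuinely be used and cannot be discarded by a pointwise bound. I would first try to verify the bounded-overlap estimate by computing the Fourier coefficients of $\int_0^1\mu_\sigma\,d\sigma$. I would then show that they decay like a product of one-variable kernels of size $1+O(p^{-1/2})$ at each prime, so that the density is bounded by an Euler-type product $\prod_p(1+O(p^{-1-\delta}))$.
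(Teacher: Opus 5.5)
Your deduction of \eqref{eq:hardy} from \eqref{eq:Hardyabsvalue} matches the paper. Your dual reformulation is also sound: the paper's symbol is in effect $\sum_j \sigma_j c_j \psi_j$ with $|c_j|\le 1$. The gap is that \eqref{eq:Hardyabsvalue} itself is not proved. The step you call the main obstacle is the entire content of the theorem, and the mechanism you propose for it does not work.

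Your kernels are positive and built from Poisson kernels: directly for the large primes, and via harmonic measure of boxes for the small ones. The box boundary points $1/2+\sigma'+it$ are again interior points that must still be represented on $\TT^\infty$. But the Poisson kernel at $r$ has supremum $\prod_p \frac{1+r_p}{1-r_p}$, and it is essentially bounded on $\TT^\infty$ only if $\sum_p r_p<\infty$. For $r_p=p^{-1/2-\sigma}$ with $\sigma\le 1/2$ this sum diverges even over $p>e^{1/\sigma}$, so your claim that the large-prime Poisson kernel is uniformly bounded is false.

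The $\ell^2$-smallness $\sum_{p>e^{1/\sigma}}p^{-1-2\sigma}=O(1)$ controls only $\|K_r\|_2^2$, the norm of point evaluation on $H^1$. That norm is realized by the non-positive symbol $\Phi_r=\|K_r\|_2^2K_r/\overline{K_r}$, with $|\Phi_r|\equiv\|K_r\|_2^2$. By contrast, $P_r(T_r(w))=\|K_r\|_2^2\prod_p|1+r_pw_p|^2$ carries an unbounded exponential tilt. Nothing in your sketch explains how averaging over $\sigma$ and over boxes of size $\sim\sigma$ would remove this tilt. The closing Euler-product heuristic is also inconsistent, since factors of size $1+O(p^{-1/2})$ multiply to $+\infty$.

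The missing idea is to keep the non-positive symbols but \emph{localize} them, with boundedly overlapping supports. The paper does this as follows.
\begin{enumerate}
\item It takes the discrete points $\sigma_j=e^{-j-\theta}$, so that $\sigma_j\|\Phi_{r^j}\|_\infty\le 3/2$.
\item It sets $\psi_j=\Phi_{r^j}\ind_{T_{r^j}(D_j)}/m(D_j)$ with $D_j$ invariant under diagonal rotations. After the change of variables $z=T_r(w)$, the pairing with $F$ becomes $m(D)^{-1}\int_D H\,dm$ with $H$ analytic and $H(0)=F(r)$. Rotation invariance of $D$ then preserves the reproducing property.
\item The sets $D_j$ are cut out by linear and quadratic statistics in the transition points $a^{jk}=T_{r^k}^{-1}(r^j)$. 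Hoeffding's inequality together with $\mathcal E_{jk}\ge|j-k|-8$ gives $m(D_j)\ge 1/2$.
\item The M\"obius computation $a_p\Re(w'_p-w_p)\ge\frac14 a_p^2(1-\Re w_p^2)$ forces $T_{r^j}(D_j)\cap T_{r^k}(D_k)=\emptyset$ for $|j-k|\ge L$. Hence $\sum_j\sigma_j|\psi_j|\le 3L$ pointwise.
\end{enumerate}
Your proposal has no substitute for this localization-and-overlap argument.

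A smaller point concerns convergence of the series. Partial sums are not uniformly bounded on $\HH^1$ (already for functions of $2^{-s}$), so that route fails. The smoothing-plus-Tauberian route needs a coefficient bound, which the paper takes from Helson's inequality $\sum_n|a_n|^2/d(n)\le\|f\|_{\HH^1}^2$.
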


To explain that the multiplicative Hilbert matrix is generated by a bounded symbol, let $P_+ \colon L^2(\TT^\infty) \to \mathscr{B}(\HH^2)$ be the orthogonal projection. That is, for $F \in L^2(\TT^\infty)$ with Fourier series $F(z) = \sum_{q \in \mathbb{Q}_+} a_q z(q)$, where characters $z \in \TT^\infty$ now act completely multiplicatively on the positive rationals $\mathbb{Q}_+$, we have that
$$P_+ F (z) = \sum_{n \in \NN} a_n z(n).$$
Inequality \eqref{eq:hardy} can be restated by saying that the Hilbert matrix symbol $\phi(s) = 1 + \sum_{n=2}^\infty \frac{1}{\sqrt{n} \log n} n^{-s}$ induces a bounded functional on $\HH^1$. Since $\mathscr{B} \HH^1 \subset L^1(\TT^\infty)$, an appeal to Hahn--Banach therefore yields the following corollary. 
\begin{corollary} \label{cor:bddsymbol}
The multiplicative Hilbert matrix $\mathbf{H}$ has a bounded symbol. That is, there is $B\in L^\infty(\TT^\infty)$ such that
\[
 P_+B(z) = 1 + \sum_{n=2}^\infty\frac{z(n)}{\sqrt n\log n}.
\]
\end{corollary}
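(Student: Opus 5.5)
The corollary follows from inequality \eqref{eq:hardy} of Theorem~\ref{thm:main} by duality, and I take that inequality as given. Consider the linear functional on Dirichlet polynomials
\[
 L(f) = a_1 + \sum_{n=2}^\infty \frac{a_n}{\sqrt n \log n}, \qquad f(s) = \sum_{n} a_n n^{-s}.
\]
Through the Bohr lift we regard $L$ as defined on the subspace $\mathscr{B}\HH^1 \subset L^1(\TT^\infty)$, the closure of analytic polynomials $F(z)=\sum a_n z(n)$. By \eqref{eq:hardy}, $|L(f)| \le C_0'\|F\|_{L^1(\TT^\infty)}$, and the theorem also guarantees that the series converges for every $f\in\HH^1$. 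So $L$ is a bounded functional on this closed subspace of $L^1(\TT^\infty)$.

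By Hahn--Banach, $L$ extends to a bounded functional $\Lambda$ on all of $L^1(\TT^\infty)$ with $\|\Lambda\|\le C_0'$. Since $m$ is a probability measure on the compact group $\TT^\infty$, we have $L^1(m)^* = L^\infty(m)$. Hence there is $G\in L^\infty(\TT^\infty)$ with $\|G\|_\infty \le C_0'$ such that
\[
\Lambda(H) = \int_{\TT^\infty} H\, G\, dm \qquad \text{for all } H\in L^1(\TT^\infty).
\]
Set $B(z) = \overline{G(z)}$. Then $\Lambda(H) = \int H\overline{B}\,dm$ and $B\in L^\infty$ with the same norm.

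It remains to identify $P_+B$, which makes sense because $B\in L^\infty\subset L^2$. Write $B = \sum_{q\in\mathbb{Q}_+} b_q z(q)$. The characters $z(q)$ are orthonormal in $L^2(m)$, since distinct rationals give distinct characters by unique factorization. Testing $\Lambda$ on the monomial $H(z)=z(n)$ therefore gives
\[
\int z(n)\overline{B(z)}\,dm = \overline{b_n}.
\]
Comparing with the definition of $L$, we get $\overline{b_1}=1$ and $\overline{b_n} = 1/(\sqrt n\log n)$ for $n\ge2$, which are real. Hence $b_n$ equals these same values for every $n\in\NN$, and so
\[
P_+B(z) = 1+\sum_{n\ge2} \frac{z(n)}{\sqrt n \log n}.
\]
This is the required bounded symbol. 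The coefficients $b_q$ with $q\notin\NN$ are unconstrained and play no role.

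There is no real obstacle once \eqref{eq:hardy} is known. The only points to check are that the monomials $z(n)$ lie in $\mathscr{B}\HH^1$, where $\Lambda$ agrees with $L$, and the complex conjugation convention in the duality pairing. The latter is harmless here because the coefficients are real.
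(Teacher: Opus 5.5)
Your proposal is correct and follows the paper's argument: the paper also reads \eqref{eq:hardy} as a bounded functional on $\mathscr{B}\HH^1 \subset L^1(\TT^\infty)$ and appeals to Hahn--Banach. You have made explicit the steps the paper leaves implicit, namely the $L^1$--$L^\infty$ duality and the identification of the coefficients of $P_+B$ by testing on the monomials $z(n)$.
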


Before embarking on the proof of Theorem~\ref{thm:main}, let us note that the validity of the \textit{Hardy inequality}, featuring absolute values on the coefficients,
\begin{equation} \label{eq:Hardyineqreal}
 |a_1|+\sum_{n\ge2}\frac{|a_n|}{\sqrt n\log n}
 \lesssim\|f\|_{\HH^1}
\end{equation}
remains an open problem. In one variable, the Hardy inequality
\[
\sum_{n=0}^\infty \frac{|\hat{g}(n)|}{n+1} \leq \pi\|g\|_{H^1(\mathbb{T})}
\]
is a relatively direct consequence of the boundedness of the Hilbert matrix, since we can factor any $H^1(\mathbb{T})$-function into a product of two $H^2(\mathbb{T})$-functions.

\section{Proof of Theorem~\ref{thm:main}}
\label{sec:proof}
By a straightforward argument, it will be sufficient to show that there is a constant $C_0 > 0$ such that
\begin{equation} \label{eq:mainpointineq}
\sum_{j=0}^\infty e^{-j-\theta}
      |f(1/2+e^{-j-\theta})|\leq C_0 \|f\|_{\HH^1}, \qquad \theta \in (0,1), \; f \in \HH^1.
\end{equation}
To unburden the notation, we let
\[
\sigma_j = e^{-j-\theta},
\]
suppressing the dependence on $\theta$ both in this and all future quantities and sets to be considered. 

We fix a Dirichlet polynomial $f = \sum_{n=1}^N a_n n^{-s}$ and an integer \(J\ge0\) for the duration of the proof. It is important to keep $f$ and $J$ fixed, but arbitrary, as the remainder of the constructions will depend on them. As with $\theta$, we suppress this dependence from the notation. 

We will achieve \eqref{eq:mainpointineq} by constructing suitable functions $\psi_j \in L^\infty(\TT^\infty)$, $\|\psi_j\|_\infty \lesssim \sigma_j^{-1}$, which reproduce $f$ at $1/2 + \sigma_j$,
$$\int_{\TT^\infty} \mathscr{B}f(z) \overline{\psi_j(z)} \, dm(z) = f(1/2+\sigma_j).$$
Crucially, we will ensure that the supports $E_j = \supp \psi_j \subset \TT^\infty$ of these reproducing functions overlap boundedly; each set $E_j$ will only intersect a fixed number of other sets $E_k$.  The precise choice of functions $\psi_j$, as well as their supports $E_j$, will actually depend on the specific function $f$ considered, as well as $J$ and other parameters, introduced later.

To give the general recipe, we first work on the finite-dimensional torus $\TT^d$, $d<\infty$. We denote coordinate-wise real positive points in $\DD^d$ by
\[
r=(r_\ell)_{\ell=1}^d\in(0,1)^d.
\]
The standard analytic reproducing kernel of $H^2(\TT^d)$ at $r$ is given by 
\[ K_r(z) =\prod_{\ell=1}^d(1-r_\ell z_\ell)^{-1}, \qquad z \in \TT^d, \]
and has norm given by
\[\|K_r\|^2_{H^2(\TT^d)} = \prod_{\ell=1}^d(1-r_\ell^2)^{-1}.\]
We denote
\[
\Phi_r(z) := \|K_r\|^2_2 \frac{K_r(z)}{\overline{K_r(z)}}, \qquad z \in \TT^d.
\]
Then $\Phi_r \in L^\infty(\TT^d)$,  $|\Phi_r| \equiv \|K_r\|^2_2$, is one canonical representative of point evaluation at $r$, that is,
\[
P_+ \Phi_r = K_r.
\]
In the dimensional limit $d \to \infty$, this proves that every $r \in (0,1)^\infty \cap \ell^2$ induces a bounded point evaluation on $H^1(\TT^\infty) = \mathscr{B}(\HH^1)$, see \cite{ColeGamelin1986}. 

The key insight to our construction is that it is actually possible to restrict $\Phi_r$ to certain relatively small subsets of $\TT^d$, while maintaining the reproducing property. To formulate the basic building block precisely, we let $T_r \colon \TT^d \to \TT^d$ denote the coordinate-wise M\"obius map which as a map $T_r \colon \DD^d \to \DD^d$ sends $0$ to $r$,
\[
 T_r(z) = \left( \frac{r_1+z_1}{1+r_1 z_1}, \ldots, \frac{r_d+z_d}{1+r_d z_d} \right).
\]
\begin{lemma}\label{lem:localization}
Suppose that \(D\subset\TT^d\) has positive measure $0 < m(D) \leq 1$, and is invariant under diagonal rotations,
\[
 e^{i\vartheta}D=D, \qquad \vartheta\in\mathbb R.
\]
Let 
\begin{equation}\label{eq:localizedsymbol}
 \psi_{r,D}(z)=
 \Phi_r(z)\frac{\ind_{T_r(D)}(z)}{m(D)}, \qquad z \in \TT^d.
\end{equation}
Then 
\begin{equation}\label{eq:localizedproperties}
 P_+\psi_{r,D}=K_r,
 \qquad
 |\psi_{r,D}|=\frac{ \|K_r\|^2_2}{m(D)}\ind_{T_r(D)}.
\end{equation}
\end{lemma}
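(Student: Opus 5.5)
The plan is to verify the reproducing identity by testing $\psi_{r,D}$ against analytic polynomials, and then to use two facts: a change of variables through $T_r$, and the mean value property along diagonal rotations. The modulus statement in \eqref{eq:localizedproperties} is immediate, since $|\Phi_r|\equiv\|K_r\|_2^2$ and $\ind_{T_r(D)}/m(D)$ is nonnegative. For the first statement it suffices to show that for every analytic polynomial $g$ on $\TT^d$,
\[
\int_{\TT^d} g(w)\,\overline{\psi_{r,D}(w)}\,dm(w)=g(r)=\langle g,K_r\rangle_{H^2(\TT^d)} .
\]
Since analytic polynomials are dense in $H^2(\TT^d)$, this gives $P_+\psi_{r,D}=K_r$.

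\emph{Step 1: rewrite the conjugate of $\Phi_r$ through the Poisson kernel.} Because $r$ is real, $|1-r_\ell w_\ell|=|1-r_\ell\bar w_\ell|$ on $\TT$. Hence the Poisson kernel at $r$ is
\[
P_r(w)=\prod_{\ell=1}^d\frac{1-r_\ell^2}{|1-r_\ell w_\ell|^2}=\frac{|K_r(w)|^2}{\|K_r\|_2^2}.
\]
This gives
\[
\overline{\Phi_r(w)}=\|K_r\|_2^2\,\frac{\overline{K_r(w)}}{K_r(w)}=\|K_r\|_2^4\,\frac{P_r(w)}{K_r(w)^2}.
\]

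\emph{Step 2: change variables.} The push-forward of $m$ under $T_r$ is the harmonic measure $P_r\,dm$. In one variable this follows from $|T_{r}'(z)|=(1-r^2)/|1+rz|^2$ on $\TT$, and it tensorizes coordinatewise. Therefore
\[
\int g\,\overline{\psi_{r,D}}\,dm=\frac{\|K_r\|_2^4}{m(D)}\int_{T_r(D)}\frac{g(w)}{K_r(w)^2}P_r(w)\,dm(w)=\frac{1}{m(D)}\int_D H(z)\,dm(z),
\]
where $H(z)=\|K_r\|_2^4\,g(T_rz)\,K_r(T_rz)^{-2}$. The identity $1-r_\ell\frac{r_\ell+z_\ell}{1+r_\ell z_\ell}=\frac{1-r_\ell^2}{1+r_\ell z_\ell}$ gives
\[
K_r(T_rz)^{-2}=\prod_\ell (1-r_\ell^2)^2(1+r_\ell z_\ell)^{-2}.
\]
So $H$ is holomorphic on a neighbourhood of the closed polydisc. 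Moreover $H(0)=\|K_r\|_2^4\,g(r)\,K_r(r)^{-2}=g(r)$, because $K_r(r)=\|K_r\|_2^2$.

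\emph{Step 3: use rotation invariance.} Since $D$ is invariant under $z\mapsto e^{i\vartheta}z$ and $m$ is rotation invariant, we can average:
\[
\int_D H\,dm=\int_D\Big(\int_0^{2\pi}H(e^{i\vartheta}z)\,\frac{d\vartheta}{2\pi}\Big)dm(z).
\]
For fixed $z\in\TT^d$, the map $\lambda\mapsto H(\lambda z)$ is holomorphic on a neighbourhood of the closed unit disc. By the mean value property the inner integral equals $H(0)=g(r)$. Hence the whole expression equals $m(D)\,g(r)/m(D)=g(r)$, as required.

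The only step requiring real care is Step 2. There one must check that the Jacobian of $T_r$ on $\TT^d$ matches $P_r$ exactly, so that the factor $\overline{K_r}/K_r$ becomes the analytic quantity $K_r(T_rz)^{-2}$. I would verify this in one variable first and then take products over the coordinates.
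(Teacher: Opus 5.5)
Your proof is correct and follows the paper's argument. You change variables by $T_r$, which turns the integrand into the analytic function $H(z)=g(T_rz)\prod_\ell(1+r_\ell z_\ell)^{-2}$ with $H(0)=g(r)$, and then you use the diagonal rotation invariance of $D$ together with the mean value property. Writing $\overline{\Phi_r}\,dm$ as $\|K_r\|_2^4K_r^{-2}$ times harmonic measure $P_r\,dm$ is only a different bookkeeping of the paper's Jacobian computation.
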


\begin{proof}
Under the change of variable $z = T_r(w)$, $w \in D$, a calculation shows that
\[
 dm(z)= \prod_{\ell=1}^d
       \frac{1-r_\ell^2}{|1+r_\ell w_\ell|^2} \, dm(w)
\]
and that
\[
 K_r(T_r(w))= \|K_r\|^2_2\prod_{\ell=1}^d(1+r_\ell w_\ell).
\]
Thus, for any analytic polynomial $F$,
\[
 \int_{\TT^d}F\overline{\psi_{r,D}}\,dm= \frac{1}{m(D)
} \int_D H \,dm,
 \qquad
 H(w)=F(T_r(w))\prod_{\ell=1}^d(1+r_\ell w_\ell)^{-2}.
\]
This function $H$ extends analytically to a function on $\overline{\DD^d}$ with
\(H(0)=F(r)\). Therefore,
\[
 \frac1{2\pi}\int_0^{2\pi}H(e^{i\vartheta}w)\,d\vartheta=H(0) = F(r), \qquad w \in \TT^d.
\]
By invariance of the set $D$, we therefore have that
\[
  \int_{\TT^d}F\overline{\psi_{r,D}}\,dm = \frac{1}{m(D)
} \int_D H\,dm
 = \frac{1}{m(D)
} \int_D\frac1{2\pi}\int_0^{2\pi}H(e^{i\vartheta}w)\,d\vartheta\,dm(w)
 = F(r).
\]
Since $\psi_{r,D} \in L^\infty(\TT^d)$, we have that $P_+ \psi_{r,D} \in H^2(\TT^d)$, and the preceding calculation shows that for any analytic polynomial $F$,
\[
  \int_{\TT^d}F\overline{P_+ \psi_{r,D}}\,dm =   \int_{\TT^d}F\overline{\psi_{r,D}}\,dm
 = F(r).
\]
We conclude that $P_+ \psi_{r,D} = K_r$, as promised.
\end{proof}

\begin{remark}
The change of measure under $T_r$ is a realization of what Harper \cite[Section~3.2]{Harper} refers to as the introduction of a "tilted probability measure". In our case, imposing diagonal rotational invariance on the set $D$ is what allows us to restrict the kernel to $T_r(D)$, yet preserve the reproducing property of the kernel. The sets $D$ will be chosen in connection with a probabilistic argument, see Definition~\ref{def:Dj}. 
\end{remark}

Let $\delta=e^{-(J+1)}$, and choose $P$, depending on $f$ and $J$, such that $n \mid P$ for every $n \leq N$, and so large that the sum over primes
\[
 \sum_{p>P}p^{-1-2\delta}\le1.
\]
We now switch our point of view slightly, and index points $r \in (0,1)^d$ over the primes $p \leq P$, where $d$ is the number of such primes. More precisely, we consider the $J+1$ points $r^j \in (0,1)^d$, $0 \leq j \leq J$, given by
\[
 r_p^j=p^{-1/2-\sigma_j}, \qquad p \leq P.
\]
Recall that $\sigma_j=e^{-j-\theta}$, for an arbitrary $\theta\in(0,1)$.

For $0 \leq j,k \leq J$, let $a^{jk}$ denote the transition point $a^{jk}= T_{r^k}^{-1}(r^j)$, so that 
\[
T_{a^{jk}} = T_{r^k}^{-1}\circ T_{r^j}.
\]
In order to be able to control the intersections $T_{r^j}(D_j) \cap T_{r^k}(D_k)$, we will specify the sets $D_j$ in terms of the "energy"
\[
\mathcal{E}_{jk}= \|a^{jk}\|_{\ell^2}^2 = \sum_{p\leq P}(a_p^{jk})^2.
\]
The following lower bound is precisely the input needed for the probabilistic part of the argument. Its proof is routine.
\begin{lemma}\label{lem:arithmetic}	
It holds that $ |a_p^{jk}| \leq p^{-1/2}$ and
\begin{equation*}
 \mathcal{E}_{jk}\ge |j-k|-8
\end{equation*}
for all $0 \leq j,k \leq J$ and $p \leq P$.
\end{lemma}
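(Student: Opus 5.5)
The plan is to compute $a^{jk}$ explicitly coordinate by coordinate, prove the bound $|a^{jk}_p|\le p^{-1/2}$ by a one-line inequality, and bound $\mathcal E_{jk}$ from below by sums of the form $\sum_p p^{-1-s}$. These sums are then compared with $\log(1/s)$ through the Euler product for $\zeta(1+s)$.

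\emph{The pointwise bound.} Since $T_{r}$ acts coordinatewise by the real Möbius map $z\mapsto (r+z)/(1+rz)$, its inverse is $w\mapsto (w-r)/(1-rw)$. Writing $x=r^j_p=p^{-1/2-\sigma_j}$ and $y=r^k_p=p^{-1/2-\sigma_k}$, we get
\[
a^{jk}_p=\frac{x-y}{1-xy}.
\]
Both $x$ and $y$ lie in $(0,p^{-1/2})$, since $\sigma_j,\sigma_k>0$. If $x\ge y$, then $(x-y)/(1-xy)\le x$ is equivalent to $x^2y\le y$, which holds. The case $y>x$ is symmetric. Hence $|a^{jk}_p|\le\max(x,y)\le p^{-1/2}$.

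\emph{The energy bound.} Since $0<1-xy\le1$, we have $(a^{jk}_p)^2\ge (x-y)^2$. Expanding the square gives
\[
\mathcal E_{jk}\ \ge\ S_P(2\sigma_j)-2S_P(\sigma_j+\sigma_k)+S_P(2\sigma_k),\qquad S_P(s):=\sum_{p\le P}p^{-1-s}.
\]

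\emph{Comparing with the full prime sums.} Let $S(s)=\sum_{p}p^{-1-s}$.
\begin{enumerate}
\item Every exponent $s$ appearing above satisfies $2\delta\le s\le 2$, because $\sigma_J=e^{-J-\theta}\ge e^{-J-1}=\delta$ and $\sigma_j\le 1$.
\item The choice of $P$ then gives $0\le S(s)-S_P(s)\le \sum_{p>P}p^{-1-2\delta}\le 1$.
\item On the two positive terms I simply use $S_P\ge S-1$. On the negative term I use $S_P\le S$.
\end{enumerate}

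\emph{Estimating $S(s)$.} For $0<s\le2$, the Euler product gives
\[
\log\zeta(1+s)=S(s)+\sum_p\sum_{m\ge2}\frac{p^{-m(1+s)}}{m},
\]
and the double sum lies in $[0,\sum_p \tfrac{1}{p(p-1)}]\subset[0,1]$. Comparing the series for $\zeta(1+s)$ with the integral of $t^{-1-s}$ gives $1/s\le\zeta(1+s)\le 1+1/s$, hence
\[
\log(1/s)-1\le S(s)\le \log(1+1/s)\le \log(1/s)+\log 2 \quad\text{when } s\le 1.
\]
For $1<s\le 2$ all quantities involved are bounded by small absolute constants, so crude bounds suffice there.

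\emph{Assembling.} Substituting these estimates yields
\[
\mathcal E_{jk}\ \ge\ \log\frac{(\sigma_j+\sigma_k)^2}{4\sigma_j\sigma_k}-C,
\]
where $C$ collects the error terms: $1+1$ from the two truncations, $1+1$ from the two lower estimates, and $2\log2$ (or a comparable constant) from the upper estimate. Assume without loss of generality that $j\le k$, so that $\sigma_j\ge\sigma_k$. Then $(\sigma_j+\sigma_k)^2\ge\sigma_j^2$, and therefore
\[
\log\frac{(\sigma_j+\sigma_k)^2}{4\sigma_j\sigma_k}\ \ge\ \log\frac{\sigma_j}{4\sigma_k}=(k-j)-\log 4 .
\]

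The only real work is the constant bookkeeping, including the regime $s\in(1,2]$. The total loss is about $4+2\log2+\log4\approx 6.8<8$, which gives $\mathcal E_{jk}\ge|j-k|-8$. The case $j=k$ is trivial, since $\mathcal E_{jj}=0\ge-8$.
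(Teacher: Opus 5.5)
Your proposal is correct and follows essentially the same route as the paper: the Möbius inequality $x-y\le (x-y)/(1-xy)\le x$, then expanding $\sum_p(x-y)^2$ into prime zeta values estimated via the Euler product and $1/s\le\zeta(1+s)\le 1+1/s$, then $(u+v)^2/(4uv)\ge \max(u,v)/(4\min(u,v))$. The differences are only bookkeeping. You truncate the three prime sums separately, costing $2$ where the paper, which bounds the tail of the squared differences at once, loses only $1$. For $\sigma_j+\sigma_k\in(1,2)$ the upper estimate becomes $S(s)\le\log(1/s)+\log(1+s)\le\log(1/s)+\log 3$, so the total loss is $4+2\log 6\approx 7.58<8$ and your constant still closes.
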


\begin{proof}
For \(0\le y\le x<1\),
\[
 x-y\le\frac{x-y}{1-xy}\le x.
\]
Since $a_p^{jk} = (T_{r^k}^{-1}(r^j))_p =  (r_p^j-r_p^k)/(1-r_p^jr_p^k)$, this proves the first inequality, and shows that
$$\mathcal{E}_{jk} \geq \sum_{p \leq P} |r_p^j-r_p^k|^2.$$

To prove the statement from here, we really only need to know that the prime zeta function \(\mathcal P(s)=\sum_p p^{-s}\) satisfies $\mathcal{P}(1+\sigma) = -\log \sigma + O(1)$ when $\sigma \to 0^+$. More precisely, note, by expansion of the zeta Euler product, that
\[
 \log\zeta(1+\sigma)=\mathcal P(1+\sigma)+R_1(\sigma), \qquad \sigma > 0,
\]
with an error $R_1$ satisfying $0 \leq R_1(\sigma) \leq 1/2$. Since, by integral comparison,
\[
 \frac{1}{\sigma}\le\zeta(1+\sigma)\le1+\frac{1}{\sigma}, \qquad \sigma > 0,
\]
we find that 
\begin{equation*}
 \mathcal P(1+\sigma)= - \log \sigma +R_2(\sigma), 
\end{equation*}
where $-\frac{1}{2} \le R_2(\sigma) \le \log 3 < 2$ whenever $0 < \sigma \leq 2$.

For \(u,v\in(0,1]\), we therefore have
\begin{align*}
 \sum_p(p^{-1/2-u}-p^{-1/2-v})^2
 &=\mathcal P(1+2u)+\mathcal P(1+2v)-2\mathcal P(1+u+v)\\
 &\ge \log\frac{(u+v)^2}{4uv}-5\\
 &\ge |\log u-\log v|-2\log2- 5 > |\log u-\log v|- 7 
\end{align*}
By assumption, for $u, v \geq \delta$, we have that
\[
  \sum_{p > P} (p^{-1/2-u}-p^{-1/2-v})^2 \leq \sum_{p>P}p^{-1-2\delta}\le1.
\]
We apply this with \(u=\sigma_j\), \(v=\sigma_k\), to conclude that
\[
 \mathcal{E}_{jk}\geq \sum_{p \leq P} |r_p^j-r_p^k|^2 \geq  |j-k| - 8. \qedhere
\]
\end{proof}

We now define the sets $D_j$ which will serve as the input to Lemma~\ref{lem:localization}. Here and in the remainder of the proof, we fix a very large number $L$ and a small number $\varepsilon$; to be concrete, let
\[
L = 10^5 \; \textrm{ and } \; \varepsilon = \frac{1}{32}. 
\]

\begin{definition} \label{def:Dj}
For $0 \leq j \leq J$, let $D_j\subset\TT^d$ be the set of $w$ such that
\begin{equation*}\label{eq:conditions}
 \left|\sum_{p\le P}a_p^{jk}w_p\right|\le\varepsilon \mathcal{E}_{jk} \; \textrm{ and } \;
 \left|\sum_{p\le P}(a_p^{jk}w_p)^2\right|\le\frac12\mathcal{E}_{jk}, \quad \forall k \leq J \textrm{ s.t. } |k-j|\geq L.
\end{equation*}
\end{definition}
Note that each set $D_j$ is invariant under diagonal rotation, so that we may apply Lemma~\ref{lem:localization}. Furthermore, the definition is such that basic concentration inequalities ensure that $D_j$ always has large measure, which is necessary in order to control the denominator in \eqref{eq:localizedsymbol}.

\begin{lemma}\label{lem:probability}
For every $0 \leq j \leq J$,
\[
 m(D_j)\geq \frac{1}{2}.
\]
\end{lemma}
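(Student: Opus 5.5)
We bound the measure of the complement of $D_j$ by a union bound over $k$ with $|k-j|\ge L$, estimating each failure probability by a concentration inequality for the random sums $X_k=\sum_{p\le P}a_p^{jk}w_p$ and $Y_k=\sum_{p\le P}(a_p^{jk}w_p)^2$, where $w$ is Haar distributed on $\TT^d$, so the $w_p$ are independent and uniform on $\TT$. By Lemma~\ref{lem:arithmetic}, $\mathcal{E}_{jk}\ge |j-k|-8\ge L-8$ is large, and all coefficients are small: $|a_p^{jk}|\le p^{-1/2}\le 2^{-1/2}$.

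\textbf{Linear sum.} $X_k$ has mean zero and $\mathbb{E}|X_k|^2=\mathcal{E}_{jk}$. Since $X_k$ is a sum of independent bounded mean-zero complex variables, Hoeffding's inequality applied to real and imaginary parts gives
\[
m(|X_k|>\varepsilon\mathcal{E}_{jk})\le 4\exp(-c\,\varepsilon^2\mathcal{E}_{jk})
\]
for an absolute constant $c$; for instance $c=1/4$ works after splitting into real and imaginary parts, each subgaussian with variance proxy $\mathcal{E}_{jk}/2$ or so.

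\textbf{Quadratic sum.} The variables $w_p^2$ are also independent and uniform on $\TT$, so $Y_k=\sum_p (a_p^{jk})^2 w_p^2$ is of the same form, with coefficients $b_p=(a_p^{jk})^2$. Here $\sum_p b_p^2\le \max_p b_p\cdot \mathcal{E}_{jk}\le \mathcal{E}_{jk}/2$. Hoeffding then gives
\[
m\!\left(|Y_k|>\tfrac12\mathcal{E}_{jk}\right)\le 4\exp\!\left(-c'\,\mathcal{E}_{jk}^2/\mathcal{E}_{jk}\right)=4\exp(-c'\mathcal{E}_{jk}).
\]
This is even better than the linear bound.

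\textbf{Summing over $k$.} For each $j$,
\[
1-m(D_j)\le \sum_{|k-j|\ge L} 8\exp\bigl(-c\varepsilon^2(|k-j|-8)\bigr)\le 16\sum_{n\ge L}e^{-c\varepsilon^2(n-8)}.
\]
This geometric tail is about $\tfrac{16}{c\varepsilon^2}e^{-c\varepsilon^2(L-8)}$. With $\varepsilon^2=2^{-10}$ and $L=10^5$, the exponent is roughly $-25$ when $c=1/4$, so the sum is far below $1/2$.

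\textbf{Main obstacle.} The only delicate point is tracking explicit constants in the subgaussian bound for the complex sums, to confirm that the chosen $L$ and $\varepsilon$ suffice. Using Hoeffding's inequality separately on $\Re$ and $\Im$, with the crude bound $|\Re(a w)|\le |a|$, is routine and leaves ample room.
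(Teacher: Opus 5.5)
Your proposal is correct and follows the paper's proof essentially step for step. Both arguments apply Hoeffding's inequality to the real and imaginary parts of the linear sum and of the quadratic sum (using that the $w_p^2$ are again independent Steinhaus variables), and then take a union bound over $|k-j|\ge L$ using $\mathcal{E}_{jk}\ge |j-k|-8$; your constant $c=1/4$ matches the paper's $4\exp(-\varepsilon^2\mathcal{E}_{jk}/4)$. The only cosmetic difference is that you bound $\sum_p (a_p^{jk})^4\le \mathcal{E}_{jk}/2$ via $|a_p^{jk}|\le 2^{-1/2}$, whereas the paper uses the cruder $\sum_p (a_p^{jk})^4\le\mathcal{E}_{jk}$.
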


\begin{proof}
For independent Steinhaus variables $w_\ell$ (uniformly distributed on $\TT$) and real coefficients
\(b_\ell\), the Hoeffding inequality, applied separately to the real and imaginary parts, yields that 
\[
 \PP\left(\left|\sum_\ell b_\ell w_\ell\right|>x\right)
 \le4\exp\left(-\frac{x^2}{4 \sum b_\ell^2}\right).
\]
For each pair \(j,k\), $0 \leq j, k \leq J$, we conclude, recalling $\mathcal{E}_{jk} = \|a^{jk}\|_{\ell^2}^2$, that
\[
m \bigl( \{ w \, : \,  \bigl|\sum_{p\le P}a_p^{jk}w_p \bigr| > \varepsilon \mathcal{E}_{jk} \} \bigr) \leq  4 \exp(-  \varepsilon^2 \mathcal{E}_{jk}/4) = 4 \exp(- \mathcal{E}_{jk}/4096) .
\]
The variables $w_p^2$ can again be understood as independent Steinhaus variables, and after noting that
\[
 \sum_p(a_p^{jk})^4\le\sum_p(a_p^{jk})^2=\mathcal{E}_{jk},
\]
a second application of Hoeffding's inequality gives
\[
m \bigl( \{ w \, : \,  \bigl|\sum_{p\le P}(a_p^{jk}w_p)^2\bigr| > \frac{1}{2} \mathcal{E}_{jk} \} \bigr) \leq 4 \exp(- \mathcal{E}_{jk}/16) .
\]
Applying the definition of $D_j$, and then Lemma~\ref{lem:arithmetic}, we conclude that
\[
 m(\TT^d \setminus D_j)
 \leq 16e^{8/4096}\sum_{h\ge L}e^{-h/4096} < \frac{1}{2}. \qedhere
\]
\end{proof}

On the other hand, the choice of $D_j$ ensures that the sets 
\[
E_j := T_{r^j}(D_j) = \supp \psi_{r^j, D_j}
\]
have uniformly finite overlap. The point is that if $z \in E_j \cap E_k \neq \emptyset$ for two indices with $|j-k| \geq L$, the linear conditions in Definition~\ref{def:Dj} force the corresponding linear sums to be small at both $T_{r^j}^{-1}z$ and $T_{r^k}^{-1}z$. However, the quadratic condition forces the transition map $T_{a^{jk}}$ to impose a larger change to any such sum, which is contradictory. In light of the concentration inequalities used in the proof of Lemma~\ref{lem:probability}, this argument suggests an interpretation in terms of linear and quadratic statistics. \begin{lemma}\label{lem:overlap}
	We have that $E_j \cap E_k = \emptyset$ whenever $|j - k| \geq L$.
\end{lemma}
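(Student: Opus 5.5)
The plan is to argue by contradiction. Suppose $z \in E_j \cap E_k$ with $|j-k| \geq L$. I would write $w = T_{r^j}^{-1}(z) \in D_j$ and $u = T_{r^k}^{-1}(z) \in D_k$. Since $T_{a^{jk}} = T_{r^k}^{-1}\circ T_{r^j}$, this gives $u = T_{a^{jk}}(w)$. Coordinatewise, with $a = a^{jk}$,
\[
u_p = \frac{a_p + w_p}{1 + a_p w_p}, \qquad p \leq P.
\]
Next I record the symmetry $a^{kj}_p = (r^k_p - r^j_p)/(1-r^j_pr^k_p) = -a_p$, so that $\mathcal{E}_{kj} = \mathcal{E}_{jk} =: \mathcal{E}$. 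Both $D_j$ and $D_k$ then impose conditions involving the same vector $a$, up to sign. Concretely, $w \in D_j$ gives $|\sum_p a_p w_p| \leq \varepsilon\mathcal{E}$ and $|\sum_p (a_pw_p)^2| \leq \frac12\mathcal{E}$. Likewise, $u \in D_k$ gives $|\sum_p a_p u_p| \leq \varepsilon \mathcal{E}$.

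The key step is an expansion of $a_pu_p$ in terms of $x_p := a_p w_p$:
\[
a_p u_p = \frac{a_p^2 + x_p}{1+x_p} = x_p + a_p^2 - x_p^2 + R_p, \qquad R_p = -\frac{(a_p^2 - x_p^2)\,x_p}{1+x_p}.
\]
Summing over $p$ and using the triangle inequality together with the two conditions from $D_j$, I get
\[
\Bigl|\sum_p a_p u_p\Bigr| \geq \mathcal{E} - \varepsilon\mathcal{E} - \tfrac12\mathcal{E} - \sum_p |R_p|.
\]
Thus it suffices to show $\sum_p |R_p| < (\tfrac12 - 2\varepsilon)\mathcal{E} = \tfrac{7}{16}\mathcal{E}$. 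This contradicts $|\sum_p a_p u_p| \leq \varepsilon\mathcal{E}$. Here the quadratic condition does the work: it prevents the linear statistic at $w$ from absorbing the shift $\sum_p a_p^2 = \mathcal{E}$ produced by the transition map.

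The main obstacle is bounding the remainder, because the $a_p$ need not be small for small primes. By Lemma~\ref{lem:arithmetic}, $|a_p| \leq p^{-1/2} \leq 2^{-1/2}$, and
\[
|R_p| \leq \frac{2a_p^2|a_p|}{1-|a_p|}.
\]
I would split the primes at a threshold $\eta = 1/10$.
\begin{enumerate}
\item For primes with $|a_p| \leq \eta$, the bound gives $|R_p| \leq \frac{2\eta}{1-\eta}a_p^2 < \frac14 a_p^2$. These primes contribute at most $\frac14\mathcal{E}$.
\item For primes with $|a_p| > \eta$, the bound $|a_p| \leq p^{-1/2}$ forces $p < 100$. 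There $|R_p| \leq \frac{2}{1-2^{-1/2}} a_p^2 \leq 7/p$. These primes contribute an absolute constant, roughly $7\sum_{p<100}1/p < 20$.
\end{enumerate}
Lemma~\ref{lem:arithmetic} gives $\mathcal{E} \geq |j-k| - 8 \geq L - 8$, which is enormous for $L = 10^5$. Hence $\sum_p |R_p| \leq \frac14\mathcal{E} + 20 < \frac{7}{16}\mathcal{E}$, and the contradiction follows. So $E_j\cap E_k = \emptyset$.
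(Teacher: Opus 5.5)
Your proof is correct, and its architecture matches the paper's. You argue by contradiction with $w = T_{r^j}^{-1}z$ and $w' = T_{a^{jk}}(w)$, use the antisymmetry $a^{kj} = -a^{jk}$, apply the linear condition at both points, and let the quadratic condition at $w$ supply the contradiction.

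The difference is in how you quantify the shift produced by the transition map. The paper uses the exact identity
\[
a_p\,\Re(w'_p - w_p) = \frac{a_p^2(1-\Re w_p^2)}{|1+a_pw_p|^2} \ge \tfrac14\, a_p^2(1-\Re w_p^2).
\]
This holds termwise for every prime because $1-\Re w_p^2\ge 0$. Summing and applying the quadratic condition gives $\Re\sum_p a_p(w'_p-w_p)\ge \tfrac18\mathcal{E}$ with no error term. The contradiction with $2\varepsilon\mathcal{E}=\tfrac1{16}\mathcal{E}$ then needs only $\mathcal{E}>0$.

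Your second-order expansion keeps the full main term $\mathcal{E}-\sum_p x_p^2$ without the factor $\tfrac14$. The price is that the cubic remainder $R_p$ is small relative to $a_p^2$ only when $|a_p|$ is small. So you must split off the primes $p<100$, bound their contribution by an absolute constant, and then use $\mathcal{E}\ge L-8$ to absorb it. Your numerics hold: $7\sum_{p<100}1/p\approx 12.6$, and you only need $\mathcal{E}>320/3$.

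So your route uses the size of $L$ quantitatively in this lemma, whereas the paper's positivity trick is error-free. There, the largeness of $L$ is needed only for Lemma~\ref{lem:probability}.
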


\begin{proof}
It suffices to prove that if $z\in T_{r^j}(D_j) \cap T_{r^k}(D_k)$, then $|j - k| < L$. So suppose for contradiction that $|j-k|\ge L$. Let
\[
 w=T_{r^j}^{-1}z,\qquad w'=T_{r^k}^{-1}z,\qquad
 a=a^{jk},\qquad \mathcal{E}=\mathcal{E}_{jk}.
\]
The transition point $a$ was defined so that
\[
 w'_p=T_{a_p}(w_p)=\frac{w_p+a_p}{1+a_pw_p}.
\]
By calculation, we therefore have that 
\[
 a_p\,\Re\bigl(w'_p-w_p\bigr)
 =\frac{a_p^2(1-\Re w_p^2)}{|1+a_pw_p|^2}
 \ge\frac{1}{4}a_p^2(1-\Re w_p^2).
\]
The second defining condition of the set $D_j$ therefore gives, since $w \in D_j$, that
\[
 \Re\sum_pa_p(w'_p-w_p)
 \ge\frac14\left(\mathcal{E}-\Re\sum_pa_p^2w_p^2\right)
 \ge\frac18 \mathcal{E},
\]
while the first defining condition, noting also that $w' \in D_k$ and that $a^{kj} = -a^{jk}$, yields
\[
 \left|\sum_pa_p(w'_p-w_p)\right|
 \le2\varepsilon \mathcal{E}=\frac1{16} \mathcal{E}.
\]
These two estimates contradict each other.
\end{proof}

We can now finish the proof of Theorem~\ref{thm:main}. We apply Lemma~\ref{lem:localization} with $r = r^j$ and $D=D_j$, giving us a symbol for point evaluation at $r^j$,
\[
 \psi_{j} := \psi_{r^j,D_j}
 =\Phi_{r^j} \frac{\ind_{T_{r^j}(D_j)}}{m(D_j)}.
\]
Noting that
\[
 \sigma_j|\Phi_{r^j}| \leq \sigma_j \zeta(1+2\sigma_j) \leq \sigma_j+\frac{1}{2} \leq \frac{3}{2},
\]
Lemmas~\ref{lem:probability} and \ref{lem:overlap} imply the pointwise
estimate
\[
 \sum_{j=0}^J \sigma_j| \psi_{j}(z)|
 \leq 3L.
\]

Let $F = \mathscr{B} f$ be the Bohr lift of the Dirichlet polynomial $f$ that we fixed at the outset of the proof.  Then, by the reproducing property of $\psi_{j}$,
\[
 \sum_{j=0}^J \sigma_j|F(r^j)|
 \leq \int_{\TT^d} |F(z)|\sum_{j=0}^J \sigma_j| \psi_{j}(z)|\,d m(z) \leq 3L\|F\|_{L^1(\TT^d)}.
 \]
Recalling that $F(r^j)=f(1/2+\sigma_j) = f(1/2 + e^{-j-\theta})$, and integrating over $\theta \in (0,1)$, we conclude that
\[
 \int_\delta^1 |f(1/2+\sigma)|\,d\sigma
 =\int_0^1\sum_{j=0}^J e^{-j-\theta}
      |f(1/2+e^{-j-\theta})|\,d\theta \leq 3L \|f\|_{\HH^1}.
\]
Since $\delta = e^{-(J+1)}$, running the construction with larger and larger $J \to \infty$ establishes \eqref{eq:Hardyabsvalue} for the chosen Dirichlet polynomial $f$. By density, the inequality then extends to all $f \in \HH^1$, proving \eqref{eq:Hardyabsvalue}.

When $f$ is a Dirichlet polynomial, the inequality \eqref{eq:hardy} is a direct consequence, upon observing the identity
\[
 a_1+\sum_{n=2}^N\frac{a_n}{\sqrt n\log n}
 =\int_0^1 f(1/2+\sigma)\,d\sigma+
   \int_1^\infty(f(1/2+\sigma)-a_1)\,d\sigma
\]
and the trivial estimate
\[
\int_1^\infty |f(1/2+\sigma)-a_1|\,d\sigma \leq \sum_{n=2}^N\frac{|a_n|}{n^{3/2}\log n} \leq \|f\|_{\HH^1}\sum_{n=2}^N\frac{1}{n^{3/2}\log n}.
\]

It only remains to justify the convergence of $\sum_{n=2}^\infty \frac{a_n}{\sqrt n\log n}$ for a general $f \in \mathscr{H}^1$. For $t > 0$, let
\[
G(t) = \int_{t}^\infty (f(1/2 + \sigma) - a_1) \,d\sigma = \sum_{n=2}^\infty \frac{a_n}{\sqrt{n} \log n} n^{-t}.
\]
The already established inequality \eqref{eq:Hardyabsvalue} implies the existence of $G(0) = \lim_{t \to 0^+} G(t)$. Moreover, we can use the Helson inequality \cite{Helson}, 
\[
\sum_{n=2}^\infty \frac{|a_n|^2}{d(n)} \leq \|f\|_{\HH^1}^2,
\]
where $d(n)$ denotes the divisor function. A standard argument, almost identical to the proof of the classical Fejér Tauberian theorem \cite[Section~7]{Zalcman}, then shows that
\[
\lim_{N \to \infty} \left( \sum_{n=2}^N  \frac{a_n}{\sqrt n\log n} - G((\log N)^{-1}) \right) = 0,
\]
completing the proof.

\subsection*{Acknowledgments.}
The author was supported by grant no. 334466 of the Research Council of Norway, "Fourier Methods and Multiplicative Analysis". The proof was developed with assistance from GPT 5.6 Sol. It initially grew out of an attempt to disprove \eqref{eq:Hardyineqreal}, which necessitated an (AI-driven) study of multiplicative chaos. Setting the direction to use Lemma~\ref{lem:localization} in this context resulted in a first draft of the proof.  The proof was then distilled, corrected in minor ways, simplified,  verified, and written in its final form by the author.

\end{document}